\documentclass[12pt]{article}

\date{\today}

\PassOptionsToPackage{dvipsnames,svgnames,x11names}{xcolor}
\usepackage{xcolor}
\definecolor{labelkey}{rgb}{0,0.08,0.45}
\definecolor{refkey}{rgb}{0,0.6,0.0}
\definecolor{Brown}{rgb}{0.45,0.0,0.05}
\definecolor{lime}{rgb}{0.00,0.8,0.0}
\definecolor{lblue}{rgb}{0.5,0.5,0.99}
\definecolor{OliveGreen}{rgb}{0,0.6,0}
\definecolor{tyrianpurple}{rgb}{0.4, 0.01, 0.24}
\definecolor{myseagreen}{HTML}{3FBC9D}
\definecolor{myblue}{rgb}{0.9,0.9,0.98}
\colorlet{hlcyan}{cyan!30}


\usepackage{mathpazo} 
\usepackage[T1]{fontenc}
\linespread{1.05}

\usepackage{mathtools}
\usepackage{amssymb}
\usepackage{stmaryrd}
\usepackage{empheq}

\usepackage{hyperref}
\hypersetup{
  colorlinks=true,
  linkcolor=refkey,
  urlcolor=lblue,
  citecolor=magenta
}

\usepackage{amsthm}
\makeatletter
\def\th@plain{%
  \thm@notefont{}%
  \itshape 
}
\def\th@definition{%
  \thm@notefont{}%
  \normalfont 
}

\usepackage[capitalize,nameinlink]{cleveref}
\makeatother
\newtheorem{theorem}{Theorem}[section]
\newtheorem{lemma}[theorem]{Lemma}
\newtheorem{corollary}[theorem]{Corollary}
\newtheorem{proposition}[theorem]{Proposition}
\newtheorem{definition}[theorem]{Definition}

\newtheorem{fact}[theorem]{Fact}
\newtheorem{remark}[theorem]{Remark}
\crefname{theorem}{Theorem}{Theorems}
\Crefname{theorem}{Theorem}{Theorems}
\crefname{fact}{Fact}{facts}
\Crefname{fact}{Fact}{facts}
\crefname{equation}{}{equations}
\crefname{chapter}{Appendix}{chapters}
\crefname{item}{}{items}
\crefname{enumi}{}{}

\usepackage{nicematrix}
\usepackage{booktabs}
\usepackage{siunitx}
\usepackage{tikz}
\usepackage[color=myseagreen]{todonotes}
\usepackage{soul}
\usepackage{nameref}
\usepackage{graphicx}
\usepackage{float}
\usepackage[shortlabels,inline]{enumitem}
\setlist[enumerate]{nosep}
\usepackage{relsize}

\usepackage[margin=1in,footskip=0.25in]{geometry}
\parindent 4mm
\parskip 7pt
\tolerance 3000

\makeatletter

\let\orig@label\label

\renewcommand{\label}[1]{%
  \begingroup
  \def\@currentlabelname{}%
  \ifx\current@theorem\relax\else
    \def\@currentlabelname{\current@theorem}%
  \fi
  \ifx\cref@currentlabel\undefined\else
    \let\@currentlabelname\cref@currentlabel
  \fi
  \orig@label{#1}%
  \endgroup
}

\AtBeginEnvironment{theorem}{\def\current@theorem{theorem}}
\AtBeginEnvironment{proposition}{\def\current@theorem{proposition}}
\AtBeginEnvironment{fact}{\def\current@theorem{fact}}
\makeatother


\newcommand{\seppthree}{\setlength{\itemsep}{-3pt}}

\newcommand{\lev}[1]{\ensuremath{\mathrm{lev}_{\leq #1}\:}}

\DeclareMathOperator*{\argmin}{argmin}

\newcommand{\Id}{\ensuremath{\operatorname{Id}}}

\newcommand{\ds}{\displaystyle}

\providecommand{\norm}[1]{\lVert#1\rVert}

\providecommand{\innp}[1]{\langle#1\rangle}

\allowdisplaybreaks

\author{
  Heinz H.\ Bauschke\thanks{
    Mathematics, University of British Columbia,
    Kelowna, B.C.\ V1V~1V7, Canada. E-mail: \texttt{heinz.bauschke@ubc.ca}.}
  ~~~and~~~
  Tran Thanh Tung\thanks{
    Mathematics, University of British Columbia,
    Kelowna, B.C.\ V1V~1V7, Canada. E-mail: \texttt{tung.tran@ubc.ca}.}
}

\title{\textsf{On the boundedness of the sequence generated by minibatch stochastic gradient descent}}

\begin{document}

\maketitle

\begin{abstract}
Stochastic Gradient Descent (SGD) with Polyak's stepsize has recently gained renewed attention in stochastic optimization. Recently, Orvieto, Lacoste-Julien, and Loizou introduced a decreasing variant of Polyak's stepsize, where convergence relies on a boundedness assumption of the iterates. They established that this assumption holds under strong convexity. In this paper, we extend their result by proving that boundedness also holds for a broader class of objective functions, including coercive functions. We also present a case in which boundedness may or may not hold.
\end{abstract}

{
\small
\noindent
{\bfseries 2020 Mathematics Subject Classification:}
Primary 90C15, 90C25, 65K05; Secondary 68T07, 68W20, 68W40
}

\noindent
{\bfseries Keywords:}
stochastic gradient descent algorithm, minibatch, Polyak's stepsize, coercive function.

\section{Introduction}
Let $X$ be a Euclidean space with inner product $\innp{x,y}$ and induced norm $\norm{x}:=\sqrt{\innp{x,x}}$. Our goal is to solve the optimization problem
\begin{equation*}
	\min_{x\in X} f(x):=\frac{1}{N}\sum_{i=1}^{N} f_i(x),
\end{equation*}
where each $f_i$ is convex, $L_i-$smooth, and bounded below. We denote $\ds L_{\max}:=\max_{i=1,\dots,N}L_{i}$. We use the notation $\ds\mu:=\inf f(X)$ for the infimum of any function $f$ throughout this paper. 

A popular algorithm for this setting, especially when $N$ is large, is the \emph{Stochastic Gradient Descent \cref{SGD}} algorithm
\begin{equation}
	x_{k+1}:=x_k-\gamma_k\nabla f_{B_k}(x_k),\label{SGD}\tag{SGD}
\end{equation}
where $\gamma_k>0$ is the stepsize, $B_k\subseteq\left\{1,\dots,N\right\}$ is a random batch with fixed size $b\geq1$, sampled independently and uniformly at each iteration, and $f_{B_k}:=\frac{1}{b}\sum_{i\in B_k} f_i$.

In \cite{Loizou2021}, Loizou, Vaswani, Hadj Laradji, and Lacoste-Julien proposed the following \textit{Stochastic Polyak's stepsize} \cref{SPS} for the \cref{SGD} when $b=1$:
\begin{equation}
	\gamma_k:=\lambda\min\left\{\frac{f_{i_k}(x_k)-\mu_{i_k}}{\norm{\nabla f_{i_k}(x_k)}^2},\frac{\gamma_{-1}}{\lambda}\right\},\label{SPS}\tag{SPS}
\end{equation}
where $\mu_i:=\inf f_i(X)$, and where $\gamma_{-1},\lambda>0$ are user-defined constants. However, \cref{SPS} requires the exact knowledge of $\mu_{i_k}$. The authors proved the following result:

\begin{fact}[{\cite[Theorem 3.4]{Loizou2021}}]\label{fact1.1}
	Let $f_i:X\to\mathbb{R}$ be convex, $L_i-$smooth, and bounded below for all $i=1,\dots,N$. Suppose that $f:=\frac{1}{N}\sum_{i=1}^Nf_i$ has a minimizer $x^*\in X$. Let $x_0\in X$, $\gamma_{-1}>0$ and $\lambda=1$. Then, the sequence $\left(x_k\right)_{k\in\mathbb{N}}$ generated by \cref{SGD} with \cref{SPS} satisfies
	\begin{equation*}
		\mathbb{E}\left[f(\bar{x}_k)-\mu\right]\leq\frac{\norm{x_0-x^*}^2}{\alpha k}+\frac{2\sigma^2\gamma_{-1}}{\alpha},
	\end{equation*}
	where $\alpha:=\min\big\{\frac{1}{2L_{\max}},\gamma_{-1}\big\}$, $\sigma^2:=\mu-\mathbb{E}\left[\mu_i\right]$, and $\bar{x}_k:=\frac{1}{k}\sum\limits_{i=0}^{k-1}x_i$.
\end{fact}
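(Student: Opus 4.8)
The plan is to track the squared distance $\norm{x_k-x^*}^2$ to the minimizer and distill from it a descent inequality in expectation. Since the \cref{SPS} rule is stated for $b=1$, the batch reduces to a single sampled index $i_k$, and expanding the update $x_{k+1}=x_k-\gamma_k\nabla f_{i_k}(x_k)$ gives
\[
\norm{x_{k+1}-x^*}^2=\norm{x_k-x^*}^2-2\gamma_k\innp{\nabla f_{i_k}(x_k),x_k-x^*}+\gamma_k^2\norm{\nabla f_{i_k}(x_k)}^2.
\]
Convexity of $f_{i_k}$ bounds the cross term below by $f_{i_k}(x_k)-f_{i_k}(x^*)$, so the entire task is to control the two stepsize-dependent terms.

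The two defining features of \cref{SPS} (with $\lambda=1$) drive everything. First, since $\gamma_k\le (f_{i_k}(x_k)-\mu_{i_k})/\norm{\nabla f_{i_k}(x_k)}^2$, the quadratic term obeys $\gamma_k^2\norm{\nabla f_{i_k}(x_k)}^2\le\gamma_k(f_{i_k}(x_k)-\mu_{i_k})$. Second, I would invoke the standard consequence of convexity and $L_i$-smoothness, namely $\norm{\nabla f_i(x)}^2\le 2L_i(f_i(x)-\mu_i)$ — obtained by applying the descent lemma at $x-L_i^{-1}\nabla f_i(x)$ and using $f_i\ge\mu_i$ — which yields the \emph{lower} bound $\gamma_k\ge\min\{1/(2L_{\max}),\gamma_{-1}\}=\alpha$. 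Substituting the first bound and regrouping, the cross and quadratic contributions collapse to $-\gamma_k(f_{i_k}(x_k)-\mu_{i_k})+2\gamma_k(f_{i_k}(x^*)-\mu_{i_k})$, in which both $f_{i_k}(x_k)-\mu_{i_k}$ and $f_{i_k}(x^*)-\mu_{i_k}$ are nonnegative.

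Next I would apply the two-sided stepsize bounds termwise: $\gamma_k\ge\alpha$ on the favorable negative descent term and $\gamma_k\le\gamma_{-1}$ on the unfavorable positive noise term, producing a bound free of $\gamma_k$. Taking the conditional expectation over $i_k$ and using uniform sampling converts $f_{i_k}(x_k)$ into $f(x_k)$, $f_{i_k}(x^*)$ into $\mu$, and $\mu_{i_k}$ into $\mathbb{E}[\mu_i]$, so that $\mathbb{E}[f_{i_k}(x^*)-\mu_{i_k}]=\sigma^2$ exactly. Writing $f(x_k)-\mathbb{E}[\mu_i]=(f(x_k)-\mu)+\sigma^2$ and discarding a favorable $-\alpha\sigma^2$ gives
\[
\alpha\,\mathbb{E}[f(x_k)-\mu]\le\mathbb{E}\norm{x_k-x^*}^2-\mathbb{E}\norm{x_{k+1}-x^*}^2+2\gamma_{-1}\sigma^2.
\]
Summing this telescoping inequality over $k=0,\dots,K-1$, dropping the terminal $\mathbb{E}\norm{x_K-x^*}^2\ge0$, dividing by $\alpha K$, and finally invoking Jensen's inequality $f(\bar{x}_K)\le\frac1K\sum_k f(x_k)$ delivers the claimed estimate.

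The main obstacle I anticipate is the nonlinear, random coupling between $\gamma_k$ and $i_k$: because the stepsize itself depends on the sampled index, it cannot simply be factored out of the conditional expectation. The resolution, which is the crux of the argument, is to eliminate $\gamma_k$ \emph{before} taking expectations by exploiting its deterministic two-sided bounds $\alpha\le\gamma_k\le\gamma_{-1}$ — using the lower bound precisely where the term helps and the upper bound where it hurts. The lower bound in turn rests entirely on the smoothness inequality $\norm{\nabla f_i}^2\le 2L_i(f_i-\mu_i)$, which is the one nonroutine ingredient worth isolating up front.
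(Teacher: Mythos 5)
The paper states this result as a \emph{Fact} imported from \cite[Theorem~3.4]{Loizou2021} and gives no proof of its own, so there is nothing internal to compare against; your reconstruction is correct and follows the standard argument of the cited reference. In particular, the expansion of $\norm{x_{k+1}-x^*}^2$, the use of $\gamma_k^2\norm{\nabla f_{i_k}(x_k)}^2\le\gamma_k\big(f_{i_k}(x_k)-\mu_{i_k}\big)$ from the SPS definition, the two-sided bound $\alpha\le\gamma_k\le\gamma_{-1}$ (the lower bound resting on \cref{lemma2.6}), the regrouping into $-\gamma_k\big(f_{i_k}(x_k)-\mu_{i_k}\big)+2\gamma_k\big(f_{i_k}(x^*)-\mu_{i_k}\big)$, and the telescoping plus Jensen step all check out and yield exactly the stated bound.
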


\begin{remark}
	\cref{fact1.1} implies that when $k\to\infty$, we get 
	\begin{equation*}
		\lim\limits_{k\to\infty}\mathbb{E}\left[f(\bar{x}_k)\right]\leq\mu+\frac{2\sigma^2\gamma_{-1}}{\alpha},
	\end{equation*}
	which the authors call ``convergence to a neighborhood of the solution''.
\end{remark}

In \cite{Orvieto2022}, Orvieto, Lacoste-Julien, and Loizou introduced the \textit{Decreasing Stochastic Polyak's Stepsize} \cref{DecSPS}
\begin{equation}
	\gamma_k:=\lambda_k\min\left\{\frac{f_{B_k}(x_k)-\underline{\mu_{B_k}}}{\norm{\nabla f_{B_k}(x_k)}^2},\frac{\gamma_{k-1}}{\lambda_{k-1}}\right\},\label{DecSPS}\tag{DecSPS}
\end{equation}
where $\underline{\mu_{B}}$ is a lower bound of $f_{B}$,  $\left(\lambda_k\right)_{k\in\mathbb{N}}$ is a decreasing sequence in $\mathbb{R}_{++}$, $\lambda_{-1}:=\lambda_0$ and $\gamma_{-1}>0$. Observe that for this stepsize, one does not need precise information about the optimal value of the batch objective function $f_{B_k}$ at each step but only a lower bound of $f_{B_k}$.

\begin{remark}[{\cite[Lemma 1]{Orvieto2022}}]
	From the definition of \cref{DecSPS} and the fact that $\left(\lambda_k\right)_{k\in\mathbb{N}}$ is a decreasing sequence, we have
	\begin{equation*}
			\left(\forall k\in\mathbb{N}\right)\quad \gamma_{k}\leq\lambda_k\frac{\gamma_{k-1}}{\lambda_{k-1}}\leq\gamma_{k-1},
		\end{equation*}
	i.e., $\left(\gamma_{k}\right)_{k\in\mathbb{N}}$ is indeed a decreasing stepsize.
\end{remark}

The main result for \cref{SGD} with \cref{DecSPS} is as follows:

\begin{fact}[{\cite[Theorem 3]{Orvieto2022}}]\label{fact1.4}
	Let $f_i:X\to\mathbb{R}$ be convex, $L_i-$smooth, and bounded below for all $i=1,\dots,N$. Suppose that $f:=\frac{1}{N}\sum_{i=1}^Nf_i$ has a minimizer $x^*\in X$. Let $\left(\lambda_k\right)_{k\in\mathbb{N}}$ be a decreasing sequence such that $\lambda_k\leq1$ for all $k\in\mathbb{N}$ and let $\lambda_{-1}:=\lambda_{0}$, $\gamma_{-1}>0$, $x_0\in X$. Let $\left(x_k\right)_{k\in\mathbb{N}}$ be a sequence generated by \cref{SGD} with \cref{DecSPS}. Assume that $M:=\sup_{k\in\mathbb{N}} \norm{x_k-x^*}^2<\infty$. Then we have for all $k\in\mathbb{N}$,
	\begin{equation*}
		\mathbb{E}\left[f(\bar{x}_k)\right]-\mu\leq\frac{M}{\alpha}\frac{1}{\lambda_{k-1}k}+\sigma_b^2\frac{\sum_{i=0}^{k-1}\lambda_{i}}{k},
	\end{equation*}
	where $\alpha:=\min\big\{\frac{1}{2L_{\max}},\frac{\gamma_{-1}}{\lambda_0}\big\}$ and $\bar{x}_k:=\frac{1}{k}\sum_{i=0}^{k-1}x_i$.
\end{fact}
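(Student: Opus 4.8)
\textit{Proposed approach.} The plan is to run a Lyapunov/energy argument on the squared distance $d_k:=\norm{x_k-x^*}^2$, exploiting the two inequalities built into \cref{DecSPS}. First I would expand one step of \cref{SGD},
\[
d_{k+1}=d_k-2\gamma_k\langle\nabla f_{B_k}(x_k),x_k-x^*\rangle+\gamma_k^2\norm{\nabla f_{B_k}(x_k)}^2,
\]
and use convexity of $f_{B_k}$ to replace the inner product, yielding
\[
d_{k+1}\le d_k-2\gamma_k\bk{f_{B_k}(x_k)-f_{B_k}(x^*)}+\gamma_k^2\norm{\nabla f_{B_k}(x_k)}^2.
\]

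Next I would feed in the two crucial facts about the stepsize. From the first entry of the $\min$ defining \cref{DecSPS} comes the \emph{upper} control $\gamma_k\norm{\nabla f_{B_k}(x_k)}^2\le\lambda_k\bk{f_{B_k}(x_k)-\underline{\mu_{B_k}}}$, which bounds the quadratic term by $\gamma_k\lambda_k\bk{f_{B_k}(x_k)-\underline{\mu_{B_k}}}$. Combining this with $\lambda_k\le1$ and the elementary chain $\underline{\mu_{B_k}}\le\mu_{B_k}\le f_{B_k}(x^*)$ collapses the one-step bound into a \emph{nonnegative progress term} plus a \emph{noise term},
\[
d_{k+1}\le d_k-\gamma_k\bk{f_{B_k}(x_k)-\underline{\mu_{B_k}}}+2\gamma_k\bk{f_{B_k}(x^*)-\underline{\mu_{B_k}}}.
\]
For the \emph{lower} control, since each $f_{B_k}$ is convex, $L_{\max}$-smooth and bounded below, the descent lemma gives $\norm{\nabla f_{B_k}(x_k)}^2\le2L_{\max}\bk{f_{B_k}(x_k)-\underline{\mu_{B_k}}}$, so the first entry of the $\min$ is at least $1/(2L_{\max})$; together with the decreasing-ratio property $\gamma_k/\lambda_k\le\gamma_{k-1}/\lambda_{k-1}$ a short induction gives $\gamma_k\ge\alpha\lambda_k$. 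Because $f_{B_k}(x_k)-\underline{\mu_{B_k}}\ge0$, I may then replace $\gamma_k$ by $\alpha\lambda_k$ in the progress term.

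Taking the conditional expectation over the uniform batch $B_k$ and using $\mathbb{E}[f_{B_k}(\cdot)]=f(\cdot)$ turns the progress term into $\alpha\lambda_k\bk{f(x_k)-\mu}$, the correction being favourable because $\mu\ge\mathbb{E}[\mu_{B_k}]\ge\mathbb{E}[\underline{\mu_{B_k}}]$; the remaining stochastic piece I would bound via $\gamma_k\le(\gamma_{-1}/\lambda_0)\lambda_k$ and the batch slack $\mathbb{E}[f_{B_k}(x^*)-\underline{\mu_{B_k}}]$ recorded by $\sigma_b^2$. Writing $\delta_k:=\mathbb{E}[d_k]$, this produces a recursion of the form $\alpha\lambda_k\mathbb{E}[f(x_k)-\mu]\le\delta_k-\delta_{k+1}+\text{noise}_k$. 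I would then divide by $\lambda_k$ and sum over $k=0,\dots,K-1$: the telescoping distance part is handled by Abel summation, where monotonicity of $(\lambda_k)$ together with the boundedness hypothesis $\delta_k\le M$ gives $\sum_{k=0}^{K-1}(\delta_k-\delta_{k+1})/\lambda_k\le M/\lambda_{K-1}$, while the noise part is arranged to sum to $\sigma_b^2\sum_{k=0}^{K-1}\lambda_k$. Dividing by $\alpha K$ and invoking Jensen's inequality $f(\bar{x}_K)\le\frac1K\sum_{k=0}^{K-1}f(x_k)$ for the convex function $f$ yields the claimed estimate.

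The main obstacle is the stochastic noise term, and it is the characteristic difficulty of Polyak-type stepsizes: $\gamma_k$ is itself a function of the random batch $B_k$ and of $\nabla f_{B_k}(x_k)$, so it cannot be pulled outside the conditional expectation, and $f_{B_k}(x_k)-f_{B_k}(x^*)$ is not sign-definite. The fix is to work throughout with the manifestly nonnegative surrogate $f_{B_k}(x_k)-\underline{\mu_{B_k}}$, which alone makes the deterministic lower bound $\gamma_k\ge\alpha\lambda_k$ usable, and to route every occurrence of $\gamma_k$ through the two-sided envelope $\alpha\lambda_k\le\gamma_k\le(\gamma_{-1}/\lambda_0)\lambda_k$. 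The delicate point is to keep the correct power of $\lambda_k$ on the noise so that, after the $1/\lambda_k$ rescaling and the $1/K$ averaging, it appears as the \emph{decaying} mean $\sigma_b^2\frac{\sum_{i=0}^{k-1}\lambda_i}{k}$ rather than a fixed constant; this is exactly where the decreasing nature of $(\lambda_k)$ is indispensable, since for constant $\lambda_k$ the same term reduces to the fixed neighbourhood radius appearing in \cref{fact1.1}.
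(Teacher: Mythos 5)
First, a point of comparison: the paper does not prove \cref{fact1.4} at all --- it is stated as a \emph{Fact}, imported verbatim from \cite[Theorem~3]{Orvieto2022}, so there is no in-paper proof to measure your attempt against. I therefore assess your sketch on its own terms.

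Your overall architecture (expand $\norm{x_{k+1}-x^*}^2$, use convexity, use the two-sided envelope $\alpha\lambda_k\le\gamma_k\le(\gamma_{-1}/\lambda_0)\lambda_k$, telescope via Abel summation with $\delta_k\le M$ to get the $\tfrac{M}{\alpha\lambda_{k-1}k}$ term, finish with Jensen) is the standard and correct skeleton, and the Abel step is exactly where the boundedness hypothesis enters. The gap is in the noise bookkeeping, and you half-identify it yourself. Your one-step bound is $d_{k+1}\le d_k-\gamma_k\bigl(f_{B_k}(x_k)-\underline{\mu_{B_k}}\bigr)+2\gamma_k\bigl(f_{B_k}(x^*)-\underline{\mu_{B_k}}\bigr)$, whose noise term carries only \emph{one} power of $\lambda_k$ (via $\gamma_k\le(\gamma_{-1}/\lambda_0)\lambda_k$). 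After the $1/\lambda_k$ rescaling you then perform, that noise becomes the constant $\tfrac{2\gamma_{-1}}{\alpha\lambda_0}\mathbb{E}\bigl[f_B(x^*)-\underline{\mu_B}\bigr]$ per step, and averaging over $k$ steps leaves a \emph{non-decaying} constant --- i.e.\ your route proves a ``convergence to a neighborhood'' estimate of the type in \cref{fact1.1}, not the claimed bound with the Ces\`aro term $\sigma_b^2\tfrac{\sum_{i=0}^{k-1}\lambda_i}{k}$. To obtain the stated result the per-step noise must be $O(\lambda_k^2)$: one keeps the quadratic term as $\gamma_k^2\norm{\nabla f_{B_k}(x_k)}^2\le\gamma_k\lambda_k\bigl(f_{B_k}(x_k)-\underline{\mu_{B_k}}\bigr)$ and splits \emph{that} as $\gamma_k\lambda_k\bigl(f_{B_k}(x_k)-f_{B_k}(x^*)\bigr)+\gamma_k\lambda_k\bigl(f_{B_k}(x^*)-\underline{\mu_{B_k}}\bigr)$, so the second piece is $\le(\gamma_{-1}/\lambda_0)\lambda_k^2\bigl(f_{B_k}(x^*)-\underline{\mu_{B_k}}\bigr)$ while the first piece is absorbed into the inner-product term to give a progress coefficient $(2-\lambda_k)\gamma_k$ multiplying $f_{B_k}(x_k)-f_{B_k}(x^*)$. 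At that point the quantity being multiplied is no longer sign-definite, so the replacement $\gamma_k\mapsto\alpha\lambda_k$ cannot be done termwise as you describe; this is precisely the ``characteristic difficulty'' you name, but your proposed fix (routing everything through the nonnegative surrogate $f_{B_k}(x_k)-\underline{\mu_{B_k}}$) is what forces the noise back down to a single power of $\lambda_k$ and loses the claimed rate. As written, the proposal does not deliver the stated inequality.
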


\begin{fact}[Ces\`aro means]\label{fact1.5}
	If $\left(\lambda_k\right)_{k\in\mathbb{N}}$ lies in $\mathbb{R}$ and $\lambda_{k}\to\lambda$, then $\frac{1}{k}\sum_{i=0}^{k-1}\lambda_i\to\lambda$.
\end{fact}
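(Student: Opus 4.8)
The plan is to prove this classical fact via a standard two-part $\eps$-splitting of the averaged sum. First I would reduce to the case of a null sequence by setting $a_i := \lambda_i - \lambda$; since $\lambda = \frac{1}{k}\sum_{i=0}^{k-1}\lambda$ we have $\frac{1}{k}\sum_{i=0}^{k-1}\lambda_i - \lambda = \frac{1}{k}\sum_{i=0}^{k-1}a_i$, and $a_i \to 0$. It therefore suffices to show that the Ces\`aro averages of any sequence $(a_i)$ with $a_i \to 0$ themselves tend to $0$.

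Fix $\eps > 0$. Because $a_i \to 0$, there is an index $N$ with $\abs{a_i} < \eps/2$ for all $i \geq N$. I would then split the average at $N$ and apply the triangle inequality,
\begin{equation*}
	\Big|\frac{1}{k}\sum_{i=0}^{k-1}a_i\Big| \leq \frac{1}{k}\sum_{i=0}^{N-1}\abs{a_i} + \frac{1}{k}\sum_{i=N}^{k-1}\abs{a_i}.
\end{equation*}
The second (tail) sum has at most $k$ terms, each below $\eps/2$, so it is at most $\eps/2$ for every $k$. The first (head) sum has a fixed numerator $S := \sum_{i=0}^{N-1}\abs{a_i}$ that does not depend on $k$; hence $S/k < \eps/2$ once $k > 2S/\eps$. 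Combining the two bounds gives $\big|\frac{1}{k}\sum_{i=0}^{k-1}a_i\big| < \eps$ for all sufficiently large $k$, which is exactly the claimed convergence.

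There is no genuine obstacle here; the only point requiring care is the treatment of the head of the sum. The first $N$ terms $a_0,\dots,a_{N-1}$ may be arbitrarily large, but once $N$ is fixed their total contribution is a constant divided by $k$ and therefore vanishes as $k \to \infty$. The essential structure is the order of quantifiers: $\eps$ is chosen first, then $N$ is determined from the convergence of $(a_i)$, and only afterwards is $k$ taken large enough to absorb the head.
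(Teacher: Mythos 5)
Your proof is correct: it is the standard $\eps$-splitting argument for Ces\`aro means, with the quantifiers in the right order and the head of the sum handled properly. The paper states this as a classical fact without giving any proof, so there is nothing to compare against; your argument would serve as a complete justification.
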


\begin{remark}
	In light of \cref{fact1.4} and \cref{fact1.5}, if $k\lambda_{k-1}\to\infty$, and $\lambda_{k}\to0$ ($\Rightarrow\frac{1}{k}\sum_{i=0}^{k-1}\lambda_i\to0$), then $\lim_{k\to\infty}\mathbb{E}\left[f(\bar{x}_k)\right]=\mu$. A standard choice for $\left(\lambda_{k}\right)_{k\in\mathbb{N}}$ that satisfies these assumptions is $\lambda_k:=\frac{1}{(k+1)^\theta}$ with $\theta\in(0,1)$, or $\lambda_k:=\frac{\ln(k+2)}{(k+2)^\theta}$ with $\theta\in(0,1]$.
\end{remark}

\cref{fact1.4} relies on a crucial assumption that the sequence $(x_k)_{k\in\mathbb{N}}$ is bounded, which the author only verified when $\lambda_{k}:=\frac{1}{\sqrt{k+1}}$ and under the rather restrictive assumption that each $f_i$ is strongly convex (see \cite[Proposition 1]{Orvieto2022}). In fact, many recent variants of \cref{SGD} with \cref{DecSPS} still use this assumption (see \cite[Theorem 5]{Islamov2024}, \cite[Corollary 5]{Jiang2023}, \cite[Theorem 3.6]{Oikonomou2024}).

In this paper, we provide the following new results on \cref{fact1.4}:
\begin{itemize}
    \item We present a scheme in which the sequence generated by \cref{SGD} with \cref{DecSPS} can be unbounded (see \cref{theorem3.4}).
    \item We then relax the condition of \cite[Proposition 1]{Orvieto2022} from strong convexity to a broader class of functions whose lower level sets, with the minimizers removed, are bounded (e.g., coercive functions), and extend the result to both constant and diminishing step sizes (see \cref{theorem3.8}).
    \item Finally, we discuss cases not covered by the above results.
\end{itemize}

The rest of the paper is organized as follows: \cref{s:prelim} introduces preliminary results. \cref{s:counterex} presents an example in which the sequence generated by \cref{SGD} with \cref{DecSPS} blows up to infinity. \cref{s:main} establishes our main boundedness result. \cref{s:discussion} discusses scenarios not covered by our assumptions.

The notation we employ is standard and largely follows, e.g., \cite{Bauschke2017} and \cite{Garrigos2024}.

\section{Preliminary results}
\label{s:prelim}
In this section, we introduce definitions, facts, and lemmas from convex analysis that will be useful in the subsequent sections.

\begin{definition}[{\cite[Definition 5.1]{Beck2017}}]
	Let $L\geq0$.  A function $f: X\to\mathbb{R}$ is said to be {\rm$L$-smooth} if it is differentiable and satisfies
	\begin{equation*}
		\left(\forall x,y\in X\right)\quad\norm{\nabla f(x)-\nabla f(y)}\leq L\norm{x-y}.
	\end{equation*}
\end{definition}

\begin{remark}\label{remark2.2}
	In the finite sum setting, if $f:=\frac{1}{N}\sum^{N}_{i=1}f_i$ and each $f_i$ is $L_i$-smooth, then every batch function $f_B$ is $L_{\max}$-smooth.
\end{remark}

\begin{definition}[{\cite[Equation (2.5)]{Beck2017}}]
	A function $f:X\to\mathbb{R}$ is said to be {\rm convex} if
	\begin{equation*}
		\left(\forall x,y\in X\right)\left(\forall \lambda\in(0,1)\right)\quad f\big((1-\lambda)x+\lambda y\big)\leq\left(1-\lambda\right)f(x)+\lambda f(y).
	\end{equation*}
\end{definition}

\begin{remark}\label{remark2.4}
	In the finite sum setting, if $f:=\frac{1}{N}\sum^{N}_{i=1}f_i$ and each $f_i$ is convex, then every batch function $f_B$ is also convex.
\end{remark}

\begin{fact}[{\cite[Theorem 5.8]{Beck2017}}]\label{fact2.5}
	Let $L>0$ and let $f: X\to\mathbb{R}$ be convex and differentiable. Then $f$ is $L$-smooth if and only if
	\begin{equation*}
		\left(\forall x,y\in X\right)\quad f(y)\leq f(x)+\innp{\nabla f(x),y-x}+\frac{L}{2}\norm{y-x}^2.
	\end{equation*}
\end{fact}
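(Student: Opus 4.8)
The plan is to prove the equivalence by establishing each implication separately; it is worth noting at the outset that the forward direction ($L$-smoothness $\Rightarrow$ the quadratic upper bound) does not actually use convexity, whereas the reverse direction crucially does.

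First I would prove the forward direction, which is the classical \emph{descent lemma}. Fixing $x,y\in X$ and writing $\varphi(t):=f\big(x+t(y-x)\big)$ for $t\in[0,1]$, the fundamental theorem of calculus gives $f(y)-f(x)=\int_0^1\innp{\nabla f(x+t(y-x)),y-x}\,dt$. Subtracting $\innp{\nabla f(x),y-x}$ from both sides and applying Cauchy--Schwarz together with the Lipschitz bound $\norm{\nabla f(x+t(y-x))-\nabla f(x)}\le L\,t\norm{y-x}$ yields $f(y)-f(x)-\innp{\nabla f(x),y-x}\le\int_0^1 Lt\norm{y-x}^2\,dt=\tfrac{L}{2}\norm{y-x}^2$, as desired.

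The reverse direction is where I expect the real work to lie, and where convexity enters. The key idea is a minimization trick. Fix $x\in X$ and consider the auxiliary function $g(z):=f(z)-\innp{\nabla f(x),z}$. It is convex and differentiable with $\nabla g(z)=\nabla f(z)-\nabla f(x)$, so $\nabla g(x)=0$; since $g$ is convex, $x$ is a global minimizer of $g$. Moreover $g$ inherits the same quadratic upper bound as $f$ (the added linear term cancels in the inequality), so evaluating that bound at the shifted point $z-\tfrac{1}{L}\nabla g(z)$ gives $g(x)\le g\big(z-\tfrac1L\nabla g(z)\big)\le g(z)-\tfrac{1}{2L}\norm{\nabla g(z)}^2$, that is, $\tfrac{1}{2L}\norm{\nabla f(z)-\nabla f(x)}^2\le f(z)-f(x)-\innp{\nabla f(x),z-x}$.

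Finally I would symmetrize: running the same argument with the roles of the two points interchanged gives a companion inequality, and adding the two produces the co-coercivity estimate $\tfrac1L\norm{\nabla f(x)-\nabla f(y)}^2\le\innp{\nabla f(x)-\nabla f(y),x-y}$. One application of Cauchy--Schwarz to the right-hand side, followed by cancellation of a factor $\norm{\nabla f(x)-\nabla f(y)}$ (the case where this vanishes being trivial), then yields $\norm{\nabla f(x)-\nabla f(y)}\le L\norm{x-y}$. The main obstacle is spotting the minimization trick in the reverse direction---using the descent inequality at $z-\tfrac1L\nabla g(z)$ and the fact that convexity forces the stationary point $x$ to be a \emph{global} minimizer of $g$---since without it one cannot convert a pointwise bound on function values back into a Lipschitz bound on the gradient.
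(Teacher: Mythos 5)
The paper does not prove this statement---it is imported verbatim as a Fact from Beck's \emph{First-Order Methods in Optimization} (Theorem 5.8). Your argument is correct and is essentially the standard textbook proof of that theorem: the descent lemma via the fundamental theorem of calculus for the forward direction, and the minimization trick leading to co-coercivity (where convexity is genuinely needed) for the converse.
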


\begin{lemma}[{\cite[Lemma 2.28]{Garrigos2024}}]\label{lemma2.6}
	Let $L>0$ and let $f: X\to\mathbb{R}$ be convex, $L$-smooth and bounded below. Then
	\begin{equation*}
		\left(\forall x\in X\right)\quad f(x)-\mu\geq\frac{1}{2L}\norm{\nabla f(x)}^2.
	\end{equation*}
	Note that $f$ does not need to have a minimizer.
\end{lemma}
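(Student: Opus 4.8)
The plan is to invoke the descent inequality of \cref{fact2.5} at the point obtained by taking one ``full'' gradient step from $x$, and then to exploit only that $\mu$ is a lower bound on the values of $f$, so that no minimizer is required.

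First I would fix an arbitrary $x\in X$. Since $f$ is convex and $L$-smooth, \cref{fact2.5} yields, for every $y\in X$,
\[
	f(y)\leq f(x)+\innp{\nabla f(x),y-x}+\frac{L}{2}\norm{y-x}^2.
\]
The right-hand side is a strictly convex quadratic in $y$, minimized at the gradient-step point $y:=x-\frac{1}{L}\nabla f(x)$. I would substitute this particular $y$, noting that $y-x=-\frac{1}{L}\nabla f(x)$, so that $\innp{\nabla f(x),y-x}=-\frac{1}{L}\norm{\nabla f(x)}^2$ and $\frac{L}{2}\norm{y-x}^2=\frac{1}{2L}\norm{\nabla f(x)}^2$. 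This collapses the bound to
\[
	f(y)\leq f(x)-\frac{1}{2L}\norm{\nabla f(x)}^2.
\]

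To conclude, I would use that $\mu:=\inf f(X)$ is by definition a lower bound on every function value, in particular $\mu\leq f(y)$. Combining this with the previous display gives $\mu\leq f(x)-\frac{1}{2L}\norm{\nabla f(x)}^2$, that is, $f(x)-\mu\geq\frac{1}{2L}\norm{\nabla f(x)}^2$; since $x$ was arbitrary, the claim follows. There is no genuine obstacle here: the only point requiring care is the final step, where one must use the infimum $\mu$ rather than a value $f(x^*)$ at an attained minimizer. This is precisely what makes the statement valid even when $f$ has no minimizer, since $\mu\leq f(y)$ holds for the infimum irrespective of attainment. Convexity enters only through \cref{fact2.5}, which supplies the descent inequality, while boundedness below guarantees $\mu>-\infty$ so that $f(x)-\mu$ is well defined and finite.
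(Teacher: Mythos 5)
Your proof is correct and is exactly the standard argument: the paper itself cites this result to \cite[Lemma 2.28]{Garrigos2024} without reproducing a proof, and the cited proof is precisely your computation, namely plugging $y:=x-\tfrac{1}{L}\nabla f(x)$ into the descent inequality of \cref{fact2.5} and then bounding $f(y)$ below by $\mu=\inf f(X)$ rather than by a value at a minimizer. Nothing is missing; your closing remark correctly identifies why attainment of the infimum is not needed.
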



\begin{definition}[{\cite[Definition 11.11]{Bauschke2017}}]
	Let $f:X\to\mathbb{R}$. Then $f$ is {\rm coercive} if
	\begin{equation*}
		\lim_{\norm{x}\to\infty}f(x)=\infty.
	\end{equation*}
\end{definition}

\begin{definition}[{\cite[Definition 1.4]{Bauschke2017}}]
	Let $f:X\to\mathbb{R}$. The {\rm lower level set} of $f$ at height $\xi\in\mathbb{R}$ is
	\begin{equation*}
		\lev{\xi}f:=\left\{x\in X\mid f(x)\leq\xi\right\}.
	\end{equation*}
\end{definition}

\begin{fact}[{\cite[Proposition 11.12]{Bauschke2017}}]\label{fact2.9}
	Let $f:X\to\mathbb{R}$. Then $f$ is coercive if and only if its lower level sets $\left(\lev{\xi}f\right)_{\xi\in\mathbb{R}}$ are bounded.
\end{fact}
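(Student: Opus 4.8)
The plan is to prove the equivalence directly from the definitions by first unfolding the limit condition $\lim_{\norm{x}\to\infty}f(x)=\infty$ into its quantifier form: $f$ is coercive if and only if for every $\xi\in\RR$ there exists $\rho>0$ such that $\norm{x}>\rho$ implies $f(x)>\xi$. Once coercivity is phrased this way, the contrapositive of the inner implication, namely $f(x)\le\xi\Rightarrow\norm{x}\le\rho$, is precisely the assertion that the lower level set $\lev{\xi}f$ is contained in the ball $\ball{0}{\rho}$. So the genuine content is merely matching these two reformulations, and I would handle each implication separately.

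For the forward direction, I would assume $f$ is coercive and fix an arbitrary $\xi\in\RR$. Applying the quantifier form of coercivity with this $\xi$ produces a radius $\rho>0$ with $\norm{x}>\rho\Rightarrow f(x)>\xi$. Taking the contrapositive of this implication yields $\lev{\xi}f\subseteq\ball{0}{\rho}$, so $\lev{\xi}f$ is bounded; since $\xi$ was arbitrary, every lower level set is bounded.

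For the reverse direction, I would assume every lower level set is bounded and argue by contradiction. If $f$ were not coercive, the negation of the quantifier form supplies a single height $\xi\in\RR$ together with a sequence $(x_n)_{\nnn}$ satisfying $\norm{x_n}\to\infty$ yet $f(x_n)\le\xi$ for every $n$; here I would extract $x_n$ by applying the negated condition with the radius $\rho=n$. Each such $x_n$ lies in $\lev{\xi}f$, so this single level set contains points of arbitrarily large norm, contradicting its boundedness. I expect the only delicate point to be the careful negation of the limit statement and the clean extraction of an unbounded sequence lying inside one fixed level set; the remaining steps are routine contrapositives, so there is no substantial obstacle beyond bookkeeping of the quantifiers.
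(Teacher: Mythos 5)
Your argument is correct. The paper states this result as a \emph{Fact}, citing \cite[Proposition~11.12]{Bauschke2017} without reproducing a proof, and your quantifier-unfolding argument (contrapositive for the forward direction, extraction of an unbounded sequence in a single level set for the converse) is precisely the standard proof given in that reference, with no gaps.
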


\begin{fact}[{\cite[Proposition 11.13]{Bauschke2017}}]\label{fact2.10}
	Let $f:X\to\mathbb{R}$ be convex. Then $f$ is coercive if and only if there exists $\xi\in\mathbb{R}$ such that $\lev{\xi}f$ is nonempty and bounded.
\end{fact}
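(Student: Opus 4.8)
The statement is a biconditional, so I would treat the two implications separately, with the forward implication essentially immediate and the reverse implication carrying the real content. For the forward direction, suppose $f$ is coercive. Then \cref{fact2.9} already gives that every lower level set $\lev{\xi}f$ is bounded, so it only remains to produce one nonempty level set: picking any $x_0\in X$ and setting $\xi:=f(x_0)$ makes $x_0\in\lev{\xi}f$, so this level set is nonempty and, being a level set of a coercive function, bounded. This half does not even use convexity.

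For the reverse direction, suppose there is a $\xi\in\RR$ with $S:=\lev{\xi}f$ nonempty and bounded, and fix some $x_0\in S$. By \cref{fact2.9}, proving that $f$ is coercive is the same as proving that every level set is bounded, and I would argue this by contradiction. Suppose some level set is unbounded, i.e.\ there exist $\eta\in\RR$ and a sequence $(y_n)_{\nnn}$ with $f(y_n)\leq\eta$ and $\norm{y_n}\to\infty$. The plan is to extract from the $y_n$ a single limiting ``recession direction'' along which $f$ stays below $\xi$, thereby producing an unbounded ray inside the bounded set $S$ — the desired contradiction.

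Concretely, I would normalize $d_n:=(y_n-x_0)/\norm{y_n-x_0}$ and use compactness of the unit sphere in the finite-dimensional space $X$ to pass to a subsequence with $d_n\to d$, $\norm{d}=1$. Then I fix $t>0$ and, for $n$ large enough that $\norm{y_n-x_0}>t$, express the point $x_0+td_n$ as the convex combination $(1-\lambda_n)x_0+\lambda_n y_n$ with $\lambda_n:=t/\norm{y_n-x_0}\in(0,1)$. Convexity gives $f(x_0+td_n)\leq(1-\lambda_n)\xi+\lambda_n\eta$, and since $\norm{y_n}\to\infty$ forces $\lambda_n\to0$, the right-hand side tends to $\xi$. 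Because a finite-valued convex function on a Euclidean space is continuous, letting $n\to\infty$ yields $f(x_0+td)\leq\xi$. As $t>0$ was arbitrary, the entire ray $\{x_0+td\mid t\geq0\}$ lies in $S=\lev{\xi}f$, contradicting the boundedness of $S$, and \cref{fact2.9} then delivers coercivity.

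The genuinely delicate step is the reverse direction, and within it the passage to the limiting direction $d$: this is where finite-dimensionality (compactness of the sphere) and the continuity of finite convex functions both enter. One must also track carefully that $\lambda_n\to0$, since it is precisely this that degrades the convex-combination bound down to the threshold $\xi$ rather than merely to the larger value $\eta$; without it the argument would not place the ray inside $S$.
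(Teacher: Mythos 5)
The paper does not prove this statement at all: it is imported verbatim as a \emph{Fact} with a citation to \cite[Proposition~11.13]{Bauschke2017}, so there is no in-paper argument to compare against. Your proof is correct and is essentially the standard argument behind that textbook result. The forward direction is indeed immediate from \cref{fact2.9} together with the observation that $f$ is real-valued, so $\lev{f(x_0)}f\ni x_0$ is nonempty. In the reverse direction, your extraction of a limiting direction $d$ with $d_n\to d$, the convex-combination bound $f(x_0+td_n)\leq(1-\lambda_n)\xi+\lambda_n\eta$ with $\lambda_n=t/\norm{y_n-x_0}\to0$, and the use of continuity of the finite convex function $f$ to pass to the limit all hold; the resulting ray $\{x_0+td\mid t\geq0\}\subseteq\lev{\xi}f$ contradicts boundedness. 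This is morally the same mechanism as the reference's proof, which phrases the step via the fact that all nonempty lower level sets of a convex function share the same recession cone: your direction $d$ is exactly a nonzero recession direction of $\lev{\eta}f$, which must then also be a recession direction of $\lev{\xi}f$. Two remarks: (a) you could dispose of the case $\eta\leq\xi$ in one line, since then $\lev{\eta}f\subseteq\lev{\xi}f$ is already bounded, though your argument covers it anyway; (b) your reliance on compactness of the unit sphere and on continuity of finite convex functions is legitimate here because the paper fixes $X$ to be Euclidean, but it is worth being aware that the cited result in \cite{Bauschke2017} is stated in a Hilbert space, where the proof must avoid norm-compactness of the sphere --- so your argument is slightly less general than the source, while entirely adequate for this paper's setting.
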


\begin{fact}[{\cite[Proposition 1.2.3]{Bertsekas2016}}]\label{fact2.11}
	Assume that $f$ is $L$-smooth and bounded below. Suppose also that $\gamma_{k}\to0$, $\sum_{k\in\mathbb{N}}\gamma_{k}=\infty$, and $\gamma_k>0$ for all $k\in\mathbb{N}$. Let $\left(x_k\right)_{k\in\mathbb{N}}$ be a sequence generated by a gradient method $x_{k+1}=x_k-\gamma_{k}\nabla f(x_k)$. Then, $\left(f(x_k)\right)_{k\in\mathbb{N}}$ converges to a finite value and $\nabla f(x_k)\to0$. Furthermore, every limit point of $\left(x_k\right)_{k\in\mathbb{N}}$ is a stationary point of $f$.
\end{fact}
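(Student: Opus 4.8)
The plan is to treat this as the classical convergence theorem for gradient descent under a diminishing but non-summable stepsize rule, and to establish the three conclusions in the order stated. First I would extract a \emph{sufficient-decrease} inequality: applying the quadratic upper bound for $L$-smooth functions (the forward direction of the estimate in \cref{fact2.5}, which needs no convexity) to $y=x_{k+1}=x_k-\gamma_k\nabla f(x_k)$ and $x=x_k$ gives
\begin{equation*}
	f(x_{k+1})\leq f(x_k)-\gamma_k\Big(1-\tfrac{L\gamma_k}{2}\Big)\norm{\nabla f(x_k)}^2.
\end{equation*}
Since $\gamma_k\to0$, there is an index $K$ with $L\gamma_k\leq1$ for $k\geq K$, so that $f(x_{k+1})\leq f(x_k)-\tfrac{\gamma_k}{2}\norm{\nabla f(x_k)}^2$ for $k\geq K$. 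Hence $(f(x_k))_{k\geq K}$ is nonincreasing and bounded below by $\mu:=\inf f(X)$, so it converges to a finite value; this is the first conclusion.

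Next I would telescope the sufficient-decrease inequality from $K$ to $n$ and let $n\to\infty$ to get $\sum_{k}\gamma_k\norm{\nabla f(x_k)}^2\leq 2\big(f(x_K)-\mu\big)<\infty$. In particular $\gamma_k\norm{\nabla f(x_k)}^2\to0$, and together with $\gamma_k\to0$ this gives $\norm{x_{k+1}-x_k}^2=\gamma_k^2\norm{\nabla f(x_k)}^2\to0$, so the steps vanish; by $L$-smoothness $\norm{\nabla f(x_{k+1})-\nabla f(x_k)}\leq L\norm{x_{k+1}-x_k}$, and more usefully $\norm{\nabla f(x_{k+1})-\nabla f(x_k)}\leq L\gamma_k\norm{\nabla f(x_k)}$. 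The summability together with $\sum_k\gamma_k=\infty$ already forces $\liminf_k\norm{\nabla f(x_k)}=0$, since otherwise $\norm{\nabla f(x_k)}^2$ would be bounded below by a positive constant for large $k$ and $\sum_k\gamma_k\norm{\nabla f(x_k)}^2$ would diverge.

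The main obstacle is upgrading this $\liminf$ to a genuine limit $\norm{\nabla f(x_k)}\to0$, which is the second conclusion. Here I would run the standard crossing-segment argument by contradiction: if $\limsup_k\norm{\nabla f(x_k)}\geq2\eps>0$, then, as the $\liminf$ is $0$, the quantity $\norm{\nabla f(x_k)}$ drops below $\eps$ and later exceeds $2\eps$ infinitely often. For each such excursion I choose the crossing index $n_j$ (first time $\norm{\nabla f(x_{n_j})}\geq2\eps$) and the last preceding index $m_j$ with $\norm{\nabla f(x_{m_j})}<\eps$, so that $\norm{\nabla f(x_k)}\geq\eps$ for $m_j<k\leq n_j$. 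Telescoping gives the lower bound $\sum_{k=m_j}^{n_j-1}\norm{\nabla f(x_{k+1})-\nabla f(x_k)}\geq\eps$, while the Lipschitz estimate gives the upper bound $L\sum_{k=m_j}^{n_j-1}\gamma_k\norm{\nabla f(x_k)}$. On the indices $k>m_j$ I use $\norm{\nabla f(x_k)}\leq\norm{\nabla f(x_k)}^2/\eps$, while the single boundary term $L\gamma_{m_j}\norm{\nabla f(x_{m_j})}=L\norm{x_{m_j+1}-x_{m_j}}$ is absorbed because the steps vanish (so it is below $\eps/2$ for all large $j$). This yields $\sum_{k=m_j+1}^{n_j-1}\gamma_k\norm{\nabla f(x_k)}^2\geq\eps^2/(2L)$ for all large $j$; summing over the infinitely many disjoint excursions contradicts $\sum_k\gamma_k\norm{\nabla f(x_k)}^2<\infty$. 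Hence $\norm{\nabla f(x_k)}\to0$.

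Finally, for the third conclusion, if $\bar x$ is a limit point with $x_{k_j}\to\bar x$, then continuity of $\nabla f$ gives $\nabla f(\bar x)=\lim_j\nabla f(x_{k_j})=0$, so $\bar x$ is stationary. I expect essentially all the difficulty to lie in the bookkeeping of the crossing-segment argument — in particular, isolating the boundary term $k=m_j$ and controlling it via the vanishing of the steps — whereas the sufficient-decrease inequality, the telescoping, and the limit-point step are routine.
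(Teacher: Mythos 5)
The paper states this result as \cref{fact2.11} without proof, simply citing Bertsekas's Proposition 1.2.3, and your argument is a correct reconstruction of that classical proof: the descent lemma, summability of $\gamma_k\norm{\nabla f(x_k)}^2$, the $\liminf$ step, and the crossing-segment argument to upgrade to a full limit, followed by continuity of $\nabla f$ for the limit points. The only bookkeeping point worth flagging is the degenerate excursion $n_j=m_j+1$, where your displayed conclusion would read $0\geq\eps^2/(2L)$; but in that case the telescoping bound already gives $\eps\leq L\gamma_{m_j}\norm{\nabla f(x_{m_j})}\to0$, a direct contradiction, so the argument goes through unchanged.
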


\section{Main results}
Recall the \cref{SGD} algorithm with fixed batch size $b\geq1$:
\begin{equation*}
	x_{k+1}:=x_k-\gamma_k\nabla f_{B_k}(x_k),
\end{equation*}
where $\gamma_k>0$. We consider 3 cases:

\begin{enumerate}[label=(C\arabic*)]
	\item $\left(\exists B\subseteq\left\{1,\dots,N\right\},|B|=b\right)\quad\argmin f_B=\varnothing$.\label{C1}
	\item $\left(\forall B\subseteq\left\{1,\dots,N\right\},|B|=b\right)\quad\argmin f_B\neq\varnothing$ and $\lev{\xi}f_B\setminus\argmin f_B$ is bounded for all $\xi\in\mathbb{R}$.\label{C2}
	\item $\left(\forall B\subseteq\left\{1,\dots,N\right\},|B|=b\right)\quad\argmin f_B\neq\varnothing$, but there exists a batch $\hat{B}$ and a constant $\hat{\xi}\in\mathbb{R}$ such that $\lev{\hat{\xi}}f_{\hat{B}}\setminus\argmin f_{\hat{B}}$ is unbounded.\label{C3}
\end{enumerate}

\begin{remark}
	A sufficient condition for \ref{C2} is that the number of coercive functions satisfies
	\begin{equation*}
		\#\left\{i\in\left\{1,\dots,N\right\}\mid f_i\text{ is coercive}\right\}\geq N-b+1;
	\end{equation*}
	indeed, in this case, every batch function is the average of at least one coercive function and several bounded below functions, and is therefore coercive. The conclusion then follows from \cref{remark2.4}, \cref{fact2.9} and \cref{fact2.10}.
\end{remark}

\begin{remark}
	When $X=\mathbb{R}$, \ref{C3} will never happen: indeed, if $\argmin f_{\hat{B}}$ were bounded, \cref{fact2.9} and \cref{fact2.10} would lead to a contradiction. WLOG, assume $\argmin f_{\hat{B}}=(-\infty,a]$. Since $\lev{\hat{\xi}}f_{\hat{B}}\setminus\argmin f_{\hat{B}}$ is unbounded and $\argmin f_{\hat{B}}\subseteq\lev{\hat{\xi}}f_{\hat{B}}$, it follows that $\lev{\hat{\xi}}f_{\hat{B}}=\mathbb{R}$; that is, $f_{\hat{B}}$ is bounded above. This, combined with \cref{remark2.4} and \cite[Corollary 8.6.2]{Rockafellar1997}, implies that $f_{\hat{B}}$ is a constant function. However, this would mean $\lev{\hat{\xi}}f_{\hat{B}}\setminus\argmin f_{\hat{B}}=\varnothing$, which is a contradiction.
\end{remark}

\begin{fact}[{\cite[Lemma 1]{Orvieto2022}}]\label{fact3.3}
	Let $f_i:X\to\mathbb{R}$ be convex, $L_i-$smooth, and bounded below for all $i=1,\dots,N$. Let $\left(\lambda_k\right)_{k\in\mathbb{N}}$ be a decreasing sequence in $\mathbb{R}_{++}$. Let $\lambda_{-1}:=\lambda_0$, $\gamma_{-1}>0$ and let $\left(\gamma_k\right)_{k\in\mathbb{N}}$ be the stepsize sequence generated by \cref{SGD} with \cref{DecSPS}. Then,
	\begin{equation*}
		\left(\forall k\in\mathbb{N}\right)\quad\min\left\{\frac{1}{2L_{\max}},\frac{\gamma_{-1}}{\lambda_0}\right\}\lambda_k\leq\gamma_k\leq \frac{\gamma_{-1}}{\lambda_0}\lambda_k.
	\end{equation*}
\end{fact}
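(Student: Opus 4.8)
The plan is to eliminate the stepsize ratio appearing inside \cref{DecSPS} by introducing the normalized quantity $\eta_k := \gamma_k/\lambda_k$. Since $\gamma_{k-1}/\lambda_{k-1} = \eta_{k-1}$, the defining recursion collapses to
\begin{equation*}
  \eta_k = \min\left\{a_k,\ \eta_{k-1}\right\},\qquad a_k := \frac{f_{B_k}(x_k) - \underline{\mu_{B_k}}}{\norm{\nabla f_{B_k}(x_k)}^2},
\end{equation*}
with $\eta_{-1} = \gamma_{-1}/\lambda_0$. This already shows that $(\eta_k)$ is decreasing, which is the engine of both bounds; the whole lemma is then a statement about $\eta_k$ multiplied back by $\lambda_k$.

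For the upper bound I would simply chain the monotonicity: $\eta_k \leq \eta_{k-1} \leq \cdots \leq \eta_{-1} = \gamma_{-1}/\lambda_0$, so that $\gamma_k = \lambda_k \eta_k \leq (\gamma_{-1}/\lambda_0)\lambda_k$, with no hypothesis on $a_k$ needed.

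For the lower bound, the key step is a uniform lower estimate on the ratio $a_k$. By \cref{remark2.2} and \cref{remark2.4} every batch function $f_{B_k}$ is convex and $L_{\max}$-smooth, and it is bounded below as an average of bounded-below functions, so \cref{lemma2.6} applies with $L = L_{\max}$ and yields $f_{B_k}(x_k) - \mu_{B_k} \geq \frac{1}{2L_{\max}}\norm{\nabla f_{B_k}(x_k)}^2$, where $\mu_{B_k} := \inf f_{B_k}(X)$. Since $\underline{\mu_{B_k}}$ is a lower bound of $f_{B_k}$, we have $\underline{\mu_{B_k}} \leq \mu_{B_k}$, so replacing $\mu_{B_k}$ by the smaller $\underline{\mu_{B_k}}$ only enlarges the numerator; hence $a_k \geq \frac{1}{2L_{\max}}$. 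A short induction then finishes it: $\eta_{-1} = \gamma_{-1}/\lambda_0 \geq \min\{\frac{1}{2L_{\max}}, \gamma_{-1}/\lambda_0\}$, and if $\eta_{k-1} \geq \min\{\frac{1}{2L_{\max}}, \gamma_{-1}/\lambda_0\}$, then $\eta_k = \min\{a_k, \eta_{k-1}\} \geq \min\{\frac{1}{2L_{\max}}, \gamma_{-1}/\lambda_0\}$, giving $\gamma_k = \lambda_k \eta_k \geq \min\{\frac{1}{2L_{\max}}, \gamma_{-1}/\lambda_0\}\lambda_k$.

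The only genuinely delicate point is the definition of $a_k$ when $\nabla f_{B_k}(x_k) = 0$; there the ratio is interpreted as $+\infty$ (consistent with convexity forcing $x_k$ to minimize $f_{B_k}$), so the minimum selects $\eta_{k-1}$ and both bounds survive unchanged. I expect this degenerate case, together with keeping the direction of the inequality $\underline{\mu_{B_k}} \leq \mu_{B_k}$ straight, to be the main things to handle with care — the rest is the monotonicity of $(\eta_k)$ and a one-line application of \cref{lemma2.6}.
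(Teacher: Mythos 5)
Your proof is correct. Note, however, that the paper states this result as a \emph{Fact} imported from \cite[Lemma~1]{Orvieto2022} and gives no proof of its own; your argument --- normalizing to $\eta_k=\gamma_k/\lambda_k$, chaining the monotone recursion $\eta_k=\min\{a_k,\eta_{k-1}\}$ for the upper bound, and using \cref{lemma2.6} together with $\underline{\mu_{B_k}}\leq\mu_{B_k}$ to get $a_k\geq\tfrac{1}{2L_{\max}}$ for the lower bound --- is exactly the standard proof from that reference, and your handling of the degenerate case $\nabla f_{B_k}(x_k)=0$ is consistent with the convention adopted later in the paper (resampling so that $x_k\notin\argmin f_{B_k}$).
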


\subsection{Case \ref{C1}}
\label{s:counterex}
\begin{theorem}\label{theorem3.4}
	Let $f_i:X\to\mathbb{R}$ be convex, $L_i-$smooth, and bounded below for all $i=1,\dots,N$. Suppose that \ref{C1} is satisfied. Let $\left(\lambda_k\right)_{k\in\mathbb{N}}$ be a decreasing sequence such that $\lambda_{k}\to0$, $\sum_{k\in\mathbb{N}}\lambda_{k}=\infty$ and $0<\lambda_0\leq1$. Let $\lambda_{-1}:=\lambda_{0}$, $\gamma_{-1}>0$, $x_0\in X$. Then, there exists a sequence of batches $\left(B_k\right)_{k\in\mathbb{N}}$ such that the corresponding sequence $\left(x_k\right)_{k\in\mathbb{N}}$ generated by \cref{SGD} with \cref{DecSPS} satisfies $\lim_{k\to\infty} \norm{x_k}=\infty$.
\end{theorem}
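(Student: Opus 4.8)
The plan is to collapse the stochastic iteration into a deterministic one by always drawing the same batch. Since \ref{C1} furnishes a batch $B$ with $\argmin f_B=\varnothing$, I would set $B_k:=B$ for every $k\in\mathbb{N}$ and write $g:=f_B$. By \cref{remark2.2} and \cref{remark2.4}, $g$ is convex and $L_{\max}$-smooth, and as an average of bounded-below functions it is bounded below; by hypothesis it has no minimizer. With this choice the recursion becomes the deterministic gradient method $x_{k+1}=x_k-\gamma_k\nabla g(x_k)$, where $\gamma_k$ is the \cref{DecSPS} stepsize evaluated along the constant batch.

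Next I would verify that the stepsize is well-defined and has the correct asymptotics. Because $g$ is convex and differentiable, $\nabla g(x)=0$ would force $x$ to be a global minimizer; since $\argmin g=\varnothing$, we get $\nabla g(x)\neq0$ for all $x$, so the quotient $\frac{g(x_k)-\underline{\mu_B}}{\norm{\nabla g(x_k)}^2}$ appearing in \cref{DecSPS} never divides by zero and $\gamma_k>0$ for all $k$. Then \cref{fact3.3} sandwiches $\gamma_k$ between $c\lambda_k$ and $\tfrac{\gamma_{-1}}{\lambda_0}\lambda_k$, where $c:=\min\{\tfrac{1}{2L_{\max}},\tfrac{\gamma_{-1}}{\lambda_0}\}>0$; combined with the assumptions $\lambda_k\to0$ and $\sum_{k}\lambda_k=\infty$, this yields $\gamma_k\to0$ and $\sum_{k}\gamma_k=\infty$.

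With these properties established, I would apply \cref{fact2.11} to $g$: its hypotheses ($g$ is $L$-smooth and bounded below, $\gamma_k\to0$, $\sum_k\gamma_k=\infty$, $\gamma_k>0$) all hold, so every limit point of $(x_k)_{k\in\mathbb{N}}$ is a stationary point of $g$. But a stationary point of a convex function is a global minimizer, and $g$ has none; hence $(x_k)_{k\in\mathbb{N}}$ has no limit points whatsoever. A finite-dimensional Bolzano--Weierstrass argument then finishes the proof: if $\norm{x_k}$ did not tend to $\infty$, some subsequence would be bounded and would therefore admit a convergent sub-subsequence, producing a limit point and contradicting the previous sentence. Thus $\norm{x_k}\to\infty$.

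I expect the only delicate points to be the reduction itself and the check that nothing degenerates, namely confirming that $\nabla g$ never vanishes so that \cref{DecSPS} stays well-defined with $\gamma_k>0$; everything else is an assembly of \cref{fact3.3}, \cref{fact2.11}, and the compactness step. It is worth flagging that finite-dimensionality of $X$ enters precisely in that last step, where boundedness is upgraded to the existence of a convergent subsequence.
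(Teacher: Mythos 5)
Your proposal is correct and follows essentially the same route as the paper: fix the minimizer-free batch from \ref{C1}, reduce to a deterministic gradient method, use \cref{fact3.3} to get $\gamma_k\to0$ and $\sum_k\gamma_k=\infty$, and invoke \cref{fact2.11} together with the absence of stationary points to conclude $\norm{x_k}\to\infty$. Your additional checks (that $\nabla f_B$ never vanishes, and the explicit Bolzano--Weierstrass step upgrading ``no limit points'' to divergence of the norms) are details the paper leaves implicit, and they are welcome.
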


\begin{proof}
	Let $\hat{B}$ be the batch where $\argmin f_{\hat{B}}(X)=\varnothing$. Consider the sequence $\left(y_k\right)_{k\in\mathbb{N}}$ generated by the gradient method applied to the problem $\min_{x\in X}f_{\hat{B}}(x)$, using the stepsize defined in \cref{DecSPS} but restricted only to the batch $\hat{B}$, that is,
	\begin{equation*}
		\gamma_{k,\hat{B}}:=\lambda_k\min\left\{\frac{f_{\hat{B}}(x_k)-\underline{\mu_{\hat{B}}}}{\norm{\nabla f_{\hat{B}}(x_k)}^2},\frac{\gamma_{k-1,\hat{B}}}{\lambda_{k-1}}\right\}.
	\end{equation*}
	Then, $(\gamma_{k,\hat{B}})_{k\in\mathbb{N}}$ is a deterministic sequence and it is bounded: indeed, by \cref{fact3.3},
	\begin{equation*}
		\left(\forall k\in\mathbb{N}\right)\quad\min\left\{\frac{1}{2L_{\max}},\frac{\gamma_{-1,\hat{B}}}{\lambda_0}\right\}\lambda_k\leq\gamma_{k,\hat{B}}\leq \frac{\gamma_{-1,\hat{B}}}{\lambda_0}\lambda_k.
	\end{equation*}
	Because $\lambda_{k}\to0$ and $\sum_{k\in\mathbb{N}}\lambda_{k}=\infty$, we have $\gamma_{k,\hat{B}}\to0$ and $\sum_{k\in\mathbb{N}}\gamma_{k,\hat{B}}=\infty$. By \cref{remark2.2} and \cref{fact2.11}, every limit point of $\left(y_k\right)_{k\in\mathbb{N}}$ is a stationary point of $f_{\hat{B}}$. However, since $f_{\hat{B}}$ is convex (see \cref{remark2.4}) and has no minimizer, it cannot have a stationary point. Thus, $\norm{y_k}\to\infty$ as $k\to\infty$. Moreover, if we were to sample the same batch $\hat{B}$ at every step, then $\left(x_k\right)_{k\in\mathbb{N}}$ would coincide with $\left(y_k\right)_{k\in\mathbb{N}}$, which implies $\lim_{k\to\infty} \norm{x_k}=\infty$.
\end{proof}

\subsection{Case \ref{C2}}
\label{s:main}
Assume that there exists $m\in\mathbb{R}_+$ such that for every batch $B\subseteq\left\{1,\dots,N\right\}$ with $|B|=b$, there exists a lower bound $\underline{\mu_{B}}$ of $f_B$ such that the stepsize sequence $\left(\gamma_{k}\right)_{k\in\mathbb{N}}$ of \cref{SGD} satisfies
\begin{equation}
	\left(\forall k\in\mathbb{N}\right)\quad\gamma_k\norm{\nabla f_{B_k}(x_k)}^2\leq m \big(f_{B_k}(x_k)-\underline{\mu_{B_k}}\big).\label{3.1}
\end{equation}

\begin{remark}\label{remark3.5}
	If $f_i:X\to\mathbb{R}$ is convex, $L_i-$smooth, and bounded below for all $i=1,\dots,N$, then \cref{remark2.2} , \cref{remark2.4}, and \cref{lemma2.6} imply that any sequence $\left(\gamma_{k}\right)_{k\in\mathbb{N}}$ that is bounded above by some $\gamma\in\mathbb{R}_+$ satisfies \cref{3.1} with $m=2L_{\max}\gamma$.
\end{remark}

\begin{remark}\label{remark3.6}
	From the definition of \cref{DecSPS}, we have
	\begin{equation*}
		\gamma_k\overset{\text{def}}{=}\lambda_k\min\left\{\frac{f_{B_k}(x_k)-\underline{\mu_{B_k}}}{\norm{\nabla f_{B_k}(x_k)}^2},\frac{\gamma_{k-1}}{\lambda_{k-1}}\right\}\leq\lambda_{k}\frac{f_{B_k}(x_k)-\underline{\mu_{B_k}}}{\norm{\nabla f_{B_k}(x_k)}^2},
	\end{equation*}
	and so,
	\begin{equation*}
		\gamma_{k}\norm{\nabla f_{B_k}(x_k)}^2\leq\lambda_{k}\big(f_{B_k}(x_k)-\underline{\mu_{B_k}}\big).
	\end{equation*}
	Since $\left(\lambda_{k}\right)_{k\in\mathbb{N}}$ is a decreasing sequence, \cref{3.1} is satisfied with $m=\lambda_0$.
\end{remark}

It is discussed in \cite[Remark 1]{Orvieto2022} that for \cref{DecSPS}, if at the $k$th step we get $\nabla f_{B_k}(x_k)=0$, we simply resample a different batch, as the update direction in \cref{SGD} would otherwise be zero, making the choice of stepsize irrelevant. Moreover, the authors stated that this event never occurred in their experiments. Therefore, we will assume that this situation never happens; that is, $x_k\notin\argmin f_{B_k}$ for all $k\in\mathbb{N}$.

\begin{proposition}\label{proposition3.7}
	Let $f_i:X\to\mathbb{R}$ be convex, differentiable, and bounded below for all $i=~1,\dots,N$. Let $x_0\in X$ and let $\left(x_k\right)_{k\in\mathbb{N}}$ be a sequence generated by \cref{SGD} with stepsize $\left(\gamma_{k}\right)_{k\in\mathbb{N}}$ satisfying \cref{3.1}. Then,
	\begin{equation*}
		\left(\forall k\in\mathbb{N}\right)\quad\norm{x_{k+1}}^2\leq\norm{x_k}^2-\left(2-m\right)\gamma_k\big(f_{B_k}(x_k)-\underline{\mu_{B_k}}\big)+2\gamma_{k}\big(f_{B_k}(0)-\underline{\mu_{B_k}}\big).
	\end{equation*}
\end{proposition}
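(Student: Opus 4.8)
The plan is to start from the \cref{SGD} update $x_{k+1} = x_k - \gamma_k \nabla f_{B_k}(x_k)$ and simply expand the squared norm. Writing $g_k := \nabla f_{B_k}(x_k)$ for brevity, we obtain $\norm{x_{k+1}}^2 = \norm{x_k}^2 - 2\gamma_k \innp{g_k, x_k} + \gamma_k^2 \norm{g_k}^2$. The whole argument then reduces to bounding the last two terms separately and rearranging; no fixed-point or minimizer structure is needed, which is consistent with the hypotheses requiring only convexity, differentiability, and boundedness below (in particular, not $L$-smoothness).

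For the cross term $-2\gamma_k \innp{g_k, x_k}$, I would invoke convexity of $f_{B_k}$ (guaranteed by \cref{remark2.4}) through the gradient inequality evaluated at the origin, namely $f_{B_k}(0) \geq f_{B_k}(x_k) + \innp{g_k, 0 - x_k}$. Rearranging gives $-\innp{g_k, x_k} \leq f_{B_k}(0) - f_{B_k}(x_k)$, and multiplying by $2\gamma_k > 0$ yields $-2\gamma_k \innp{g_k, x_k} \leq 2\gamma_k\big(f_{B_k}(0) - f_{B_k}(x_k)\big)$. This is the only place convexity enters, and evaluating at $y = 0$ rather than at a minimizer is exactly what produces the $f_{B_k}(0)$ term appearing in the statement.

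For the quadratic term I would apply the hypothesis \cref{3.1} directly, factoring one power of $\gamma_k$ out: $\gamma_k^2 \norm{g_k}^2 = \gamma_k\big(\gamma_k \norm{g_k}^2\big) \leq m\,\gamma_k\big(f_{B_k}(x_k) - \underline{\mu_{B_k}}\big)$. Substituting both bounds into the expansion gives the intermediate estimate $\norm{x_{k+1}}^2 \leq \norm{x_k}^2 + 2\gamma_k\big(f_{B_k}(0) - f_{B_k}(x_k)\big) + m\,\gamma_k\big(f_{B_k}(x_k) - \underline{\mu_{B_k}}\big)$.

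The final step is purely algebraic: I would add and subtract $\underline{\mu_{B_k}}$ in the cross-term contribution to write $f_{B_k}(0) - f_{B_k}(x_k) = \big(f_{B_k}(0) - \underline{\mu_{B_k}}\big) - \big(f_{B_k}(x_k) - \underline{\mu_{B_k}}\big)$, and then collect the two copies of $\big(f_{B_k}(x_k) - \underline{\mu_{B_k}}\big)$ — one with coefficient $-2\gamma_k$ coming from the cross term and one with coefficient $+m\gamma_k$ coming from the quadratic term — into the single coefficient $-(2-m)\gamma_k$, leaving the term $2\gamma_k\big(f_{B_k}(0) - \underline{\mu_{B_k}}\big)$ untouched. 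I do not anticipate a genuine obstacle here; the only things to watch are the careful bookkeeping of the $\underline{\mu_{B_k}}$ terms so that they recombine into precisely the stated form, and the positivity of $\gamma_k$, which is what lets me preserve the direction of the inequalities when multiplying through.
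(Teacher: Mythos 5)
Your proposal is correct and follows essentially the same argument as the paper: expand the squared norm from the \cref{SGD} update, bound the quadratic term via \cref{3.1}, bound the cross term via the convexity gradient inequality at $0$, and recombine by adding and subtracting $\underline{\mu_{B_k}}$. The only difference is the order in which the two bounds are applied, which is immaterial.
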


\begin{proof}\belowdisplayskip=-13pt
	From \cref{SGD}, \cref{3.1} and convexity of the batch function $f_{B_k}$ (see \cref{remark2.4}), we obtain
        \begin{align}
		\norm{x_{k+1}}^2\overset{\cref{SGD}}{=}\, \ \ &\norm{x_k-\gamma_{k}\nabla f_{B_k}(x_k)}^2\notag\\
		=\ \ \ \ \ &\norm{x_k}^2-2\gamma_{k}\innp{\nabla f_{B_k}(x_k),x_k}+\gamma_{k}^2\norm{\nabla f_{B_k}(x_k)}^2\notag\\
		\overset{\cref{3.1}}{\leq}\ \ \ \ \ &\norm{x_k}^2-2\gamma_{k}\innp{\nabla f_{B_k}(x_k),x_k}+m\gamma_{k}\big(f_{B_k}(x_k)-\underline{\mu_{B_k}}\big)\notag\\
		\overset{\text{convexity}}{\leq}&\norm{x_k}^2-2\gamma_{k}\big(f_{B_k}(x_k)-f_{B_k}(0)\big)+m\gamma_{k}\big(f_{B_k}(x_k)-\underline{\mu_{B_k}}\big)\notag\\
		=\ \ \ \ \ &\norm{x_k}^2-2\gamma_{k}\big(f_{B_k}(x_k)-\underline{\mu_{B_k}}+\underline{\mu_{B_k}}-f_{B_k}(0)\big)\notag\\
		&+m\gamma_{k}\big(f_{B_k}(x_k)-\underline{\mu_{B_k}}\big)\notag\\
		=\ \ \ \ \ &\norm{x_k}^2-\left(2-m\right)\gamma_{k}\big(f_{B_k}(x_k)-\underline{\mu_{B_k}}\big)+2\gamma_{k}\big(f_{B_k}(0)-\underline{\mu_{B_k}}\big).\notag\qedhere
	\end{align}
\end{proof}

\begin{theorem}\label{theorem3.8}
	Let $f_i:X\to\mathbb{R}$ be convex, $L_i-$smooth, and bounded below for all $i=~1,\dots,N$. Suppose that \ref{C2} is satisfied. Let $x_0\in X$. Let $\left(x_k\right)_{k\in\mathbb{N}}$ be a sequence generated by \cref{SGD} with stepsize $\left(\gamma_{k}\right)_{k\in\mathbb{N}}$ satisfying \cref{3.1} with $m<2$ and $0<\gamma_k\leq\gamma$. Then $\left(x_k\right)_{k\in\mathbb{N}}$ is bounded.
\end{theorem}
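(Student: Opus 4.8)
The plan is to combine the one-step estimate of \cref{proposition3.7} with the geometric content of \ref{C2} to show that $\left(\norm{x_k}\right)_{k\in\mathbb{N}}$ is trapped below a fixed radius. Write $c:=2-m>0$ and abbreviate $g_B(x):=f_B(x)-\underline{\mu_B}\geq 0$, so that \cref{proposition3.7} becomes $\norm{x_{k+1}}^2\leq\norm{x_k}^2-c\gamma_k g_{B_k}(x_k)+2\gamma_k g_{B_k}(0)$. Since there are only finitely many batches, I would set $C:=\max_B g_B(0)<\infty$ and record the clean one-step bound $\norm{x_{k+1}}^2-\norm{x_k}^2\leq\gamma_k\big(2C-c\,g_{B_k}(x_k)\big)$.

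First I would identify the region where the norm cannot grow: the right-hand side above is nonpositive as soon as $g_{B_k}(x_k)\geq 2C/c$, i.e. as soon as $f_{B_k}(x_k)\geq\underline{\mu_{B_k}}+2C/c$. Contrapositively, if $\norm{x_{k+1}}>\norm{x_k}$, then $x_k$ lies in $\lev{\xi_{B_k}}f_{B_k}$ with $\xi_B:=\underline{\mu_B}+2C/c$. This is exactly where \ref{C2} enters: because of the standing assumption that $x_k\notin\argmin f_{B_k}$ for all $k$, such an $x_k$ in fact lies in $\lev{\xi_{B_k}}f_{B_k}\setminus\argmin f_{B_k}$, which \ref{C2} guarantees is bounded. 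Setting $\rho:=\max_B\sup\{\norm{x}\mid x\in\lev{\xi_B}f_B\setminus\argmin f_B\}$, finite since there are finitely many batches and each such set is bounded, I obtain the key dichotomy: $\norm{x_k}>\rho\Rightarrow\norm{x_{k+1}}\leq\norm{x_k}$.

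Next I would bound how far a single step can move while $\norm{x_k}\leq\rho$, to rule out a large jump out of the ball. From \cref{3.1} and $\gamma_k\leq\gamma$ one gets $\norm{x_{k+1}-x_k}^2=\gamma_k^2\norm{\nabla f_{B_k}(x_k)}^2\leq\gamma\, m\, g_{B_k}(x_k)$. When $\norm{x_k}\leq\rho$, continuity of each $f_B$ on the compact ball $\{\norm{x}\leq\rho\}$ gives $g_{B_k}(x_k)\leq D:=\max_B\big(\max_{\norm{x}\leq\rho}f_B(x)-\underline{\mu_B}\big)<\infty$, hence $\norm{x_{k+1}}\leq\rho+\sqrt{\gamma m D}=:\rho+\delta$.

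Combining the two cases yields $\norm{x_{k+1}}\leq\max\{\norm{x_k},\rho+\delta\}$ for every $k$, and a one-line induction then gives $\norm{x_k}\leq\max\{\norm{x_0},\rho+\delta\}$ for all $k$, which is the desired boundedness. I expect the main obstacle to be the second step: one must resist concluding that the sublevel set $\lev{\xi_B}f_B$ itself is bounded, which it need not be since $\argmin f_B$ may be unbounded, and instead exploit the standing assumption $x_k\notin\argmin f_{B_k}$ to place the iterate in the bounded remainder $\lev{\xi_B}f_B\setminus\argmin f_B$. The finiteness of the batch collection, used repeatedly to pass to maxima of the constants $C$, $\rho$, $D$, is what keeps every threshold uniform in $k$.
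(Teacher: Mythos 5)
Your proof is correct, and it rests on the same two pillars as the paper's: the one-step estimate of \cref{proposition3.7}, and the observation that \ref{C2} combined with the standing assumption $x_k\notin\argmin f_{B_k}$ confines any norm-increasing iterate to one of finitely many bounded sets $\lev{\xi_B}f_B\setminus\argmin f_B$. Where you genuinely diverge is in how you control the size of a single step and how you organize the induction. The paper bounds $\norm{x_{k+1}-x_k}=\gamma_k\norm{\nabla f_{B_k}(x_k)}$ via $L_{\max}$-smoothness measured against a minimizer $x^*_{B_k}$, which injects the constants $\max_B\norm{x_B^*}$ and the factor $(1+\gamma L_{\max})$ into the final radius and leads to a contrapositive-style induction step (assume $\norm{x_{k+1}}$ is large, deduce $\norm{x_k}$ is large, deduce descent). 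You instead bound the step directly from \cref{3.1}, getting $\norm{x_{k+1}-x_k}^2\le\gamma m\,g_{B_k}(x_k)$, and make this uniform on the ball of radius $\rho$ by continuity and compactness; this yields the cleaner recursion $\norm{x_{k+1}}\le\max\{\norm{x_k},\rho+\delta\}$ and never needs the locations of the minimizers. Both are valid; the paper's constants are explicit in $L_{\max}$ and $\norm{x_B^*}$, yours go through a compactness constant. Two cosmetic points: if $\lev{\xi_B}f_B\setminus\argmin f_B$ is empty for every $B$, your $\rho$ is a supremum over the empty set, so, as the paper does with its index list $I$, you should restrict the maximum to the batches for which the set is nonempty (or set $\rho:=0$ in that degenerate case) --- the argument is unaffected; and your symbol $D$ collides with the paper's $D$, which plays the role of your $C$.
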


\begin{proof}
	Let 
	\begin{equation*}
		D:=\max_{\substack {B\subseteq\left\{1,\dots,N\right\}\\ |B|=b}}\big(f_{B}(0)-\underline{\mu_{B}}\big).
	\end{equation*}
	Let $I$ be the list of batches $B$ with $|B|=b$ such that $\lev{\underline{\mu_B}+\frac{2D}{2-m}}f_B\setminus\argmin f_B$ is nonempty. Assumption \ref{C2} implies
	\begin{equation}
		\left(\forall B\in I\right)\left(\exists M_B>0\right)\left(\forall x\in\lev{\underline{\mu_B}+\frac{2D}{2-m}}f_B\setminus\argmin f_B\right)\quad \norm{x}\leq M_B.\label{3.2}
	\end{equation}
	Let $M:=\max_{B\in {I}}M_B$, and let $x^*_B$ denote a minimizer for each batch function $f_B$. Define 
	\begin{equation*}
		c:=\max\left\{M,\frac{\gamma L_{\max}}{1+\gamma L_{\max}}\max_{\substack {B\subseteq\left\{1,\dots,N\right\}\\ |B|=b}}\norm{x_B^*}\right\}.
	\end{equation*}

	We will prove by induction that
	\begin{equation*}
		\left(\forall k\in\mathbb{N}\right)\quad\norm{x_k}^2\leq\max\left\{4c^2\left(1+\gamma L_{\max}\right)^2,\norm{x_0}^2\right\}.
	\end{equation*}
	The base case $k=0$ clearly holds. For the induction step, assume the induction hypothesis
	\begin{equation}
		\norm{x_k}^2\leq\max\left\{4c^2\left(1+\gamma L_{\max}\right)^2,\norm{x_0}^2\right\},\label{3.3}
	\end{equation}
	holds for some $k\in\mathbb{N}$. If
	\begin{equation*}
		\norm{x_{k+1}}\leq2c\left(1+\gamma L_{\max}\right),
	\end{equation*}
	then \cref{3.3} holds for $k+1$, and we are done. So, we consider the case where
	\begin{equation}
		\norm{x_{k+1}}>2c\left(1+\gamma L_{\max}\right).\label{3.4}
	\end{equation}

	By the triangle inequality, $L_{\max}$-smoothness of $f_{B_k}$ (see \cref{remark2.2}), and $\gamma_k\leq\gamma$, we get that
	\begin{align*}
		\norm{x_k}&\geq\norm{x_{k+1}}-\norm{x_{k+1}-x_k}\\
		&=\norm{x_{k+1}}-\gamma_k\norm{\nabla f_{B_k}(x_k)}\\
		&\geq\norm{x_{k+1}}-\gamma L_{\max}\norm{x_k-x^*_{B_k}}\\
		&\geq\norm{x_{k+1}}-\gamma L_{\max}\left(\norm{x_k}+\norm{x^*_{B_k}}\right).
	\end{align*}
	Rearrange and divide both sides by $1+\gamma L_{\max}$, we obtain
	\begin{equation}
		\norm{x_k}\geq\frac{\norm{x_{k+1}}}{1+\gamma L_{\max}}-\frac{\gamma L_{\max}}{1+\gamma L_{\max}}\norm{x_{B_k}^*}.\label{3.5}
	\end{equation}
	
	From the definition of $c$, we get that
	\begin{equation}
		\frac{\gamma L_{\max}}{1+\gamma L_{\max}}\norm{x_{B_k}^*}\leq c.\label{3.6}
	\end{equation}
	Combining \cref{3.5}, \cref{3.4}, \cref{3.6} and the definition of $c$, we obtain
	\begin{equation*}
		\norm{x_k}>2c-c=c\geq {M}.
	\end{equation*}
	Hence,
	\begin{equation*}
		\norm{x_k}>M.
	\end{equation*}
	This combined with \cref{3.2} and the definition of $M$ yields
	\begin{equation}
		\left(\forall B\in I\right)\quad x_k\notin \lev{\underline{\mu_B}+\frac{2D}{2-m}}f_B\setminus\argmin f_B.\label{3.7}
	\end{equation}
	
	From \cref{3.7} and the definition of $I$, we obtain
	\begin{equation*}
		\left(\forall B\subseteq\left\{1,\dots,N\right\},|B|=b\right)\quad x_k\notin \lev{\underline{\mu_B}+\frac{2D}{2-m}}f_B\setminus\argmin f_B,
	\end{equation*}
	which implies
	\begin{equation*}
		f_{B_k}(x_k)=\mu_{B_k}\text{ or }f_{B_k}(x_k)>\underline{\mu_{B_k}}+\frac{2D}{2-m}.
	\end{equation*}
	Since we assumed $x_k\notin\argmin f_{B_k}$ for all $k\in\mathbb{N}$, we deduce that
	\begin{equation}
		f_{B_k}(x_k)>\underline{\mu_{B_k}}+\frac{2D}{2-m}.\label{3.8}
	\end{equation}

	From Proposition \ref{proposition3.7}, \cref{3.3}, \cref{3.8} and the definition of $D$, we obtain
	\begin{align*}
		\norm{x^{k+1}}^2&\leq\norm{x^k}^2-(2-m)\gamma_{k}\big(f_{B_k}(x_k)-\underline{\mu_{B_k}}\big)+2\gamma_{k}\big(f_{B_k}(0)-\underline{\mu_{B_k}}\big)\\
		&< \max\left\{4c^2\left(1+\gamma L_{\max}\right)^2,\norm{x_0}^2\right\}-(2-m)\gamma_{k}\frac{2D}{2-m}+2\gamma_{k}D\\
		&=\max\left\{4c^2\left(1+\gamma L_{\max}\right)^2,\norm{x_0}^2\right\}.
	\end{align*}
	This conclude the proof.
\end{proof}

\begin{corollary}
	Let $f_i:X\to\mathbb{R}$ be convex, $L_i-$smooth, and bounded below for all $i=~1,\dots,N$. Suppose that \ref{C2} is satisfied. Let $\left(\gamma_k\right)_{k\in\mathbb{N}}$ be a sequence such that $0<\gamma_k\leq\gamma<\frac{1}{L_{\max}}$ for all $k\in\mathbb{N}$. Let $x_0\in X$ and let $\left(x_k\right)_{k\in\mathbb{N}}$ be a sequence generated by \cref{SGD}. Then $(x_k)_{k\in\mathbb{N}}$ is bounded.
\end{corollary}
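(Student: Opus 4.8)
The plan is to obtain this corollary as a direct specialization of \cref{theorem3.8}: all that is needed is to check that the uniform step-size bound $0<\gamma_k\leq\gamma<\frac{1}{L_{\max}}$ forces the two hypotheses of that theorem, namely that \cref{3.1} holds with some $m<2$ and that the step sizes are bounded by $\gamma$. The latter is assumed outright, so the work is entirely in producing the right $m$.

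First I would invoke \cref{remark3.5}. Since each $f_i$ is convex, $L_i$-smooth, and bounded below, and the step-size sequence $(\gamma_k)_{k\in\mathbb{N}}$ is bounded above by $\gamma\in\mathbb{R}_+$, that remark guarantees that \cref{3.1} is satisfied with the explicit constant $m=2L_{\max}\gamma$. This is exactly the point where the constant-step-size regime feeds into the framework of \ref{C2}.

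Next I would translate the strict inequality $\gamma<\frac{1}{L_{\max}}$ into the required strict bound on $m$: multiplying through by $2L_{\max}>0$ gives $m=2L_{\max}\gamma<2$. Thus the hypotheses ``$m<2$ and $0<\gamma_k\leq\gamma$'' of \cref{theorem3.8} are met, and applying that theorem immediately yields that $(x_k)_{k\in\mathbb{N}}$ is bounded.

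There is essentially no obstacle here; the only points requiring care are purely bookkeeping. One must make sure that \cref{remark3.5} delivers precisely the constant $m=2L_{\max}\gamma$ (rather than a constant tied to the individual $\gamma_k$), so that the single strict inequality $\gamma<1/L_{\max}$ is what certifies $m<2$ uniformly in $k$; and one should note that \ref{C2} is carried over unchanged, since both this corollary and \cref{theorem3.8} assume it. No additional estimate or induction is needed beyond what \cref{theorem3.8} already supplies.
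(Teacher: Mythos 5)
Your proposal is correct and follows exactly the paper's own argument: invoke \cref{remark3.5} to obtain \cref{3.1} with $m=2L_{\max}\gamma$, observe that $\gamma<\frac{1}{L_{\max}}$ gives $m<2$, and then apply \cref{theorem3.8}. No differences worth noting.
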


\begin{proof}
	From \cref{remark3.5} and $0<\gamma_k\leq\gamma<\frac{1}{L_{\max}}$ for all $k\in\mathbb{N}$, it follows that the sequence $\left(\gamma_{k}\right)_{k\in\mathbb{N}}$ satisfies \cref{3.1} with $m=2L_{\max}\gamma<2$. Therefore, \cref{theorem3.8} applies and yields the conclusion.
\end{proof}

\begin{corollary}
	Let $f_i:X\to\mathbb{R}$ be convex, $L_i-$smooth, and bounded below for all $i=~1,\dots,N$. Suppose that \ref{C2} is satisfied. Let $\left(\lambda_k\right)_{k\in\mathbb{N}}$ be a decreasing sequence such that $0<\lambda_0<2$ and let $\lambda_{-1}:=\lambda_{0}$, $\gamma_{-1}>0$, $x_0\in X$. Let $\left(x_k\right)_{k\in\mathbb{N}}$ be a sequence generated by \cref{SGD} with \cref{DecSPS}. Then $(x_k)_{k\in\mathbb{N}}$ is bounded.
\end{corollary}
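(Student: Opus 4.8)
The plan is to deduce this corollary directly from \cref{theorem3.8} by checking that the \cref{DecSPS} stepsize fits the two hypotheses required there, namely that $\left(\gamma_k\right)_{k\in\mathbb{N}}$ satisfies \cref{3.1} with some $m<2$ and that there exists $\gamma\in\mathbb{R}_+$ with $0<\gamma_k\leq\gamma$ for all $k$. Since the other hypotheses of \cref{theorem3.8} (convexity, $L_i$-smoothness, boundedness below, and \ref{C2}) are assumed verbatim in the statement of the corollary, the entire argument reduces to verifying these two stepsize conditions.

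For the first condition, I would simply invoke \cref{remark3.6}, which shows that the \cref{DecSPS} stepsize satisfies \cref{3.1} with $m=\lambda_0$. By hypothesis $0<\lambda_0<2$, so $m=\lambda_0<2$ as needed. For the second condition, I would appeal to \cref{fact3.3}, whose hypotheses are exactly those in force here, to obtain
\begin{equation*}
	\left(\forall k\in\mathbb{N}\right)\quad\min\left\{\frac{1}{2L_{\max}},\frac{\gamma_{-1}}{\lambda_0}\right\}\lambda_k\leq\gamma_k\leq\frac{\gamma_{-1}}{\lambda_0}\lambda_k.
\end{equation*}
Because $\left(\lambda_k\right)_{k\in\mathbb{N}}$ is decreasing with $\lambda_{-1}:=\lambda_0$, we have $\lambda_k\leq\lambda_0$ for every $k$, so the upper bound gives $\gamma_k\leq\frac{\gamma_{-1}}{\lambda_0}\lambda_0=\gamma_{-1}$. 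Setting $\gamma:=\gamma_{-1}$ then yields $0<\gamma_k\leq\gamma$, where positivity is guaranteed by the strictly positive lower bound (as $\lambda_k>0$, $\gamma_{-1}>0$, and $L_{\max}>0$).

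With $m=\lambda_0<2$ and $0<\gamma_k\leq\gamma:=\gamma_{-1}$ both established, \cref{theorem3.8} applies and delivers the boundedness of $\left(x_k\right)_{k\in\mathbb{N}}$, completing the proof. I do not anticipate any genuine obstacle here: the corollary is a specialization of \cref{theorem3.8} to the concrete \cref{DecSPS} stepsize, and the only mild point of care is confirming the strict positivity $\gamma_k>0$, which is exactly what the lower estimate in \cref{fact3.3} provides. This parallels the preceding corollary for constant and bounded stepsizes, except that there one invokes \cref{remark3.5} to obtain $m=2L_{\max}\gamma$, whereas here \cref{remark3.6} supplies the sharper value $m=\lambda_0$ tailored to \cref{DecSPS}.
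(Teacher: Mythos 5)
Your proposal is correct and follows essentially the same route as the paper: invoke \cref{remark3.6} to get \cref{3.1} with $m=\lambda_0<2$, use \cref{fact3.3} together with the monotonicity of $\left(\lambda_k\right)_{k\in\mathbb{N}}$ to bound $\gamma_k\leq\gamma_{-1}$, and then apply \cref{theorem3.8}. No gaps.
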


\begin{proof}
	From \cref{remark3.6} and $0<\lambda_0<2$, it follows that the sequence $\left(\gamma_{k}\right)_{k\in\mathbb{N}}$ satisfies \cref{3.1} with $m=\lambda_0<2$. Moreover, since $\left(\lambda_k\right)_{k\in\mathbb{N}}$ is a decreasing sequence, \cref{fact3.3} implies that $\gamma_k\leq\frac{\gamma_{-1}}{\lambda_0}\lambda_k\leq\gamma_{-1}$ for all $k\in\mathbb{N}$. Therefore, Theorem \ref{theorem3.8} applies and yields the conclusion.
\end{proof}

\subsection{Case \ref{C3}}
\label{s:discussion}
Let $f_i:=\frac{1}{2}d_{C_i}^2$ where $C_i$ is a nonempty closed convex subsets of $X$ and $d_{C}:=\inf_{c\in C}\norm{\cdot-c}$ for all $i=1,\dots,N$. Then each $f_i$ is convex, bounded below, and $1-$smooth with $\nabla f_i=\Id-P_{C_i}$ where $P_C$ is the orthogonal projector onto C and $\Id$ is the identity mapping on $X$.

In the case where $b=1$ and $\underline{\mu_i}=\mu_i=0$ for all $i=1,\dots,N$, \cref{DecSPS} implies
\begin{align*}
    \gamma_k&\overset{\text{def}}{=}\lambda_k\min\left\{\frac{f_{i_k}(x_k)-\mu_{i_k}}{\norm{\nabla f_{i_k}(x_k)}^2},\frac{\gamma_{k-1}}{\lambda_{k-1}}\right\}\\&=\lambda_k\min\left\{\frac{1}{2},\frac{\gamma_{k-1}}{\lambda_{k-1}}\right\}\\
    &\overset{\text{def}}{=}\lambda_k\min\left\{\frac{1}{2},\frac{\lambda_{k-1}\min\left\{\frac{1}{2},\frac{\gamma_{k-2}}{\lambda_{k-2}}\right\}}{\lambda_{k-1}}\right\}\\
    &=\lambda_k\min\left\{\frac{1}{2},\frac{\gamma_{k-2}}{\lambda_{k-2}}\right\}\\
    &\ \ \vdots\\
    &=\lambda_k\min\left\{\frac{1}{2},\frac{\gamma_{-1}}{\lambda_{0}}\right\}.
\end{align*}
Hence, \cref{SGD} with \cref{DecSPS} is equivalent to the relaxed random projection algorithm:
\begin{equation}
    x_{k+1}=x_k-\gamma_k\nabla f_{i_k}(x_k)=\left(1-\lambda_k\min\left\{\frac{1}{2},\frac{\gamma_{-1}}{\lambda_{0}}\right\}\right)x_k+\lambda_k\min\left\{\frac{1}{2},\frac{\gamma_{-1}}{\lambda_{0}}\right\}P_{C_{i_k}}x_k.\label{3.9}
\end{equation}

If each $C_i\subseteq X$ is an unbounded polyhedral set\footnote{Recall that a subset $C$ of $X$ is a polyhedral set if it is the intersection of finitely many closed halfspaces.}, then \cref{C3} is satisfied. By \cite[Theorem~2.5]{Bauschke2025}, if $\min\left\{\lambda_0,2\gamma_{-1}\right\}<4$, then the sequence $(x_k)_{k\in\mathbb{N}}$ generated by \cref{3.9} is bounded.

If we drop polyhedrality and let $\lambda_k=\gamma_{-1}=2$ for all $k\in\mathbb{N}$, then one can construct two sets $C_1,C_2$ such that \cref{C3} is satisfied and there exists a sampling of sets such that the sequence $(x_k)_{k\in\mathbb{N}}$ generated by \cref{3.9} blow up to infinity (see \cite[Example~4.1]{Bauschke2025}).

Even in this special case, we've seen that the sequence of iterates generated by \cref{SGD} with \cref{DecSPS} can either be bounded or unbounded. Thus, we leave the analysis of \cref{C3} in more settings as future work.

\section*{Data Availability}
No datasets were generated or analysed for the research described in this article.

\section*{Acknowledgments}
The research of HHB was supported by a Discovery Grant from the Natural Sciences and Engineering Research Council of Canada.

\end{document}